\documentclass{amsart}
\usepackage[utf8]{inputenc}
\usepackage{ifxetex,ifluatex}

\usepackage{colonequals}
\usepackage{amsaddr}
\usepackage{float}
\usepackage{tikz-cd}
\usepackage{amsmath}%
\usepackage{amsfonts}%
\usepackage{amsthm}
\usepackage{amssymb}%
\usepackage{graphicx}
\usepackage{enumerate}
\usepackage{hyperref}
\usepackage[all,cmtip]{xy}
\usepackage{afterpage}
\usepackage{geometry}
 \geometry{
 a4paper,
 total={150mm,247mm},
 left=30mm,
 top=25mm,
 }
 \usepackage{comment}
\usepackage{cleveref}
\crefformat{section}{§#2#1#3} 
\usepackage{csquotes}
\usepackage{stmaryrd}
\usepackage{lscape} 
\usepackage{pdflscape} 
\usepackage{rotating}
\usepackage{tabularx}
\usepackage{xcolor}



\theoremstyle{plain}
\newtheorem{theorem}{Theorem}[section]
\newtheorem{lemma}[theorem]{Lemma}
\newtheorem{corollary}[theorem]{Corollary}

\newtheorem{question}[theorem]{Question}

\makeatletter

\makeatother
\theoremstyle{definition}

\theoremstyle{remark}

\newcommand{\Z}{\mathbb{Z}}
\newcommand{\Q}{\mathbb{Q}}
\renewcommand{\P}{\mathcal{P}}
\newcommand{\QQ}{\mathcal{Q}}

\usepackage{xcolor}

\usepackage{xpatch}

\makeatletter
\xpatchcmd{%
\@maketitle}{%
\ifx\@empty\authors \else \@setauthors \fi
}{%
  \ifx\@empty\authors \else \@setauthors \fi
  \ifx\@empty\addresses \else\@setaddresses\fi
}{\typeout{Patch successful}}{\typeout{Patch failed}}
\makeatother

\begin{document}
\title[Reverse Engineered Diophantine Equation]{Reverse Engineered Diophantine Equations}

\author{Stevan Gajovi\'{c}}

\maketitle
\begin{abstract}
We answer a question of Samir Siksek, asked at the open problems session of the conference ``Rational Points 2022'', which, in a broader sense, can be viewed as a reverse engineering of Diophantine equations. For any finite set $S$ of perfect integer powers, using Mih\u{a}ilescu's theorem, we construct a polynomial $f_S\in \Z[x]$ such that the set $f_S(\Z)$ contains a perfect integer power if and only if it belongs to $S$. We first discuss the easier case where we restrict to all powers with the same exponent. In this case, the constructed polynomials are inspired by Runge's method and Fermat's Last Theorem. Therefore we can construct a polynomial-exponential Diophantine equation whose solutions are described in advance. 
\end{abstract}

\noindent \textbf{Key words:} Diophantine equations, Fermat's Last Theorem, Mih\u{a}ilescu's theorem, Runge's method, elliptic curves

\section{Introduction}
Diophantine problems look innocent but often are tricky. They can be formulated using very basic mathematics, but it turns out that solving many of them took centuries of serious work by numerous mathematicians. Some of them are still not solved; they are famous conjectures in mathematics and, more precisely, in number theory. 

The most famous Diophantine equations include some relations with perfect integer powers, and we briefly survey them in \S\ref{subsec:FamousDE}. We will use some of these equations in
the main results of this paper in \S\ref{subsec:modular-approach} and \S\ref{sec:General-case}.

When we also include rational solutions, to simplify the notation, we will talk about Diophantine problems. We also mention in \S\ref{subsec:RPoncurves} one instance of famous Diophantine problems, classifying the set of rational points $C(\Q)$ on curves $C$ defined over $\Q$. We briefly use some of the results and techniques mentioned there in \S\ref{subsec:modular-approach}.

In \S\ref{sec:FamousDE}, we note that Diophantine problems are an active and very challenging area of mathematics. On the other hand, there are equations that are trivial to solve, such as linear Diophantine equations, or some equations that have obvious ways to be solved, e.g., by considering them modulo $n$ for some $n\in \Z_{>1}$, or, more generally, using local obstructions.

Therefore, it is an interesting problem to look into the middle case - can we create similar Diophantine equations with a prescribed set of solutions so that we can solve them more easily, but not that obviously? The main task of this article is to take a different perspective - and construct Diophantine equations of a specific shape with a described set of solutions. That was precisely the question Samir Siksek asked during the conference ``Rational Points 2022''. Namely, if we consider a Diophantine equation of the type
\[
f(x)=y^n,
\]
for a given $f\in \Z[x]$, with unknowns $x,y,n\in \Z$ and $n\geq2$, can it happen that a triple $(x,y,n)$ is a solution if and only if $y^n$ belongs to a finite set of integer powers given in advance?\\

\subsection{Main results}\label{subsec:main-results}
We now reformulate Siksek's question and make it more precise. Let 
\[\P=\{a^m\colon a\in \Z, m\geq 2\}\subseteq \Z\]
denote the set of all perfect integer powers.

\begin{question}[Siksek]\label{q?-1}
Let $S\subseteq \P$ be a finite set of perfect powers. Is there a polynomial $f_S\in \Z[x]$ such that $f_S(\Z)\cap \P = S$?
\end{question}
Here we give an affirmative answer to Question \ref{q?-1} by constructing such a polynomial in \S\ref{sec:General-case}, see Theorem \ref{thm:Second-general}. \\

One could ask an easier question. Let $m\geq 2$ be a fixed integer. Denote by $\P_m=\{a^m\colon a\in \Z\}$ the set of all perfect $m$th powers of integers.

\begin{question}[Siksek]\label{q?-2}
Let $S\subseteq \P_m$ be a finite set of $m$th powers. Is there a polynomial $f_S\in \Z[x]$ such that $f_S(\Z)\cap \P_m = S$?
\end{question}

We recall that in Question \ref{q?-2}, we fix $m\in\Z_{\geq 2}$. Curves of the shape $C:y^m=f(x)$ are called {\it superelliptic curves}. We can also rephrase Question \ref{q?-2} as a task to construct superelliptic curves whose integral points have prescribed $y$-coordinates in advance.  \\

We explain two different methods to construct a polynomial as asked for in Question \ref{q?-2} (one method only works for $m\geq 3$) in \S\ref{subsec:Runge-approach} and \S\ref{subsec:modular-approach}, see Theorem \ref{thm:First-same-exponent} and Corollary \ref{cor:Second-approach-conclusion}, respectively.\\

We briefly comment on the same question when integers are replaced by rational numbers in \cref{sec:Rational-case}. We note that one of two approaches to solving Question \ref{q?-2} still works over rationals, as explained in \S\ref{subsec:rational-numbers-same-exponent}. However, our approach for Question \ref{q?-1} does not extend to rational numbers; but in the meantime, Question \ref{conj:General-rationals} was solved by Santicola \cite{ReverseDE-Q} by clever and precise refining of the arguments presented here, see \S\ref{subsec:rational-numbers-general-case}.

\section{Famous Diophantine problems}\label{sec:FamousDE}
\subsection{Famous Diophantine equations}\label{subsec:FamousDE}
Now we present several famous Diophantine equations or related conjectures. We also mention the time needed to solve these equations. We recall that these equations mentioned below ask for or are related to integer solutions. 
\begin{enumerate}
\item {\it Fermat's Last Theorem}: $x^n+y^n=z^n$, with an integer $n\geq 3$. It took more than 350 years until it was proven in series of papers by Wiles et al. that this equation has only trivial solutions, i.e., such that $xyz=0$.
\item {\it Generalized Fermat's Equation}: $x^k+y^l=z^m$, where $k,l,m\geq2$ are integers, see \cite{Bennett-Mihailescu-Siksek} for more details. A special case of this equation is related to the {\it Beal conjecture} which states that if 
$$k,l,m\geq3,$$ then for any solution $(x,y,z)\in \Z^3$, there is a prime number $p$ such that $p\mid x$, $p\mid y$, $p\mid z$, i.e., solutions cannot be triples of coprime integers. Note that the condition that $k,l,m\geq3$ is necessary, as otherwise one can find coprime solutions, such as $1^k+2^3=(\pm 3)^2$, for any $k\geq 2$, or, for example, $2^5+7^2=3^4$. More identities can be found in \cite{Bennett-Mihailescu-Siksek}. 

For fixed $k,l,m\geq 2$ such that $$1/k+1/l+1/m<1,$$ Darmon and Granville \cite{Darmon-Granville} proved that there are only finitely many triples of coprime integers $(x,y,z)$ such that $x^k+y^l=z^m$. 

If we vary $k,l,m\geq 2$ such that $1/k+1/l+1/m<1$, there is a conjecture, the {\it Fermat–Catalan conjecture}, stating that there are only finitely many sextuples 
\[(x,y,z,k,l,m)\] such that $x^k+y^l=z^m$ and $x,y,z$ are coprime.
\item Former {\it Catalan's conjecture}, now {\it Mih\u{a}ilescu's theorem}: The equation $x^a-y^b=1$ with integers $a,b\geq 2$ and $x,y>0$ has only one solution $3^2-2^3=1$. In other words, the only two positive integers which are consecutive perfect powers of integers are 8 and 9. This statement was conjectured by Catalan and proved by Mih\u{a}ilescu \cite{Mihailescu-Catalan} slightly more than 150 years later.
\item {\it (Generalized) Ramanujan-Nagell equation}: The Ramanujan-Nagell equation is the equation $x^2+7=2^n$, where $x$ and $n$ are integers. Ramanujan conjectured that there are five values of $n$ for which the equation has a solution, $n\in\{3,4,5,7,15\}$, and Nagell proved this 35 years later. It was first published in Norwegian, and later in English \cite{Nagell-Ramanujan-Nagell}. The generalized Ramanujan-Nagell equation is an equation of the shape $x^2+D=y^n$, where $x,n,D\in \Z$ and $n\geq 3$. It is widely studied, for example, see \cite{Bugeaud-Mignotte-Siksek}, or \cite{Maohua-Gokhan-Surve-generalised-Ramanujan-Nagell} for a recent survey on this equation. Note that some authors, such as, in \cite{Moree-Stewart}, consider the slightly different equation $ F(x)=p_1^{e_1}\cdots p_s^{e_s}$, for some $F\in\Z[x]$ and  $p_1, ..., p_s$ prescribed
primes and $e_i\geq 0$ (unknown) exponents.
\end{enumerate}

\subsection{Rational points on curves}\label{subsec:RPoncurves}
One of the most important examples of Diophantine problems is the following trichotomy of rational points on curves defined over $\Q$ (and in fact, over any number field $K/\Q$). Let $C/\Q$ be a {\it nice} curve; here, the adjective {\it nice} is a well-known notation used for a smooth, projective, and geometrically irreducible curve. Denote the genus of $C$ by $g(C)$.

\begin{enumerate}[(a)]
\item\label{g=0} ({\it known for a long time}) If $g(C)=0$, then either $C(\Q)=\emptyset$ (e.g., consider $C:x^2+y^2=3$) or $C(\Q)$ is infinite (e.g., consider $C:x^2+y^2=1$, this curve can be used to parametrize the Pythagorean triples) and furthermore isomorphic to $\mathbb{P}^1(\Q)$.

\item\label{g=1} ({\it Mordell, 1922} \cite{Mordell}) If $g(C)=1$ and $C(\Q)\neq \emptyset$, then $C$ is called an {\it elliptic curve} and there is a group law on $C(\Q)$, which makes $C(\Q)$ a finitely generated group, i.e., $C(\Q)\cong \Z^r\bigoplus T$, where $T$ is a finite (torsion) subgroup, and $r$ is called the {\it rank} of $C$ over $\Q$. 

There are only finitely many possibilities for the torsion subgroup $T$ (exactly 15), as these were classified by Mazur \cite{Mazur-torsion-MCEI}, \cite{Mazur-torsion-RPMC}. To prove this statement, Mazur determined rational points on certain types of curves, called modular curves. 

Computing the rank $r$ is still a difficult problem, and there are some ways that might succeed in computing it, but there is still no guarantee that the known algorithms can compute the rank of all elliptic curves, look, for example, at Silverman's book \cite{SilvermanAEC}. Also, these algorithms for computing ranks are quite complicated, and in practice, it is challenging for humans to perform them. Hence, these algorithms are implemented in computer algebra systems, such as \texttt{Magma} \cite{Magma}.
\item\label{g>=2} ({\it Faltings, 1983} \cite{Faltings}) If $g(C)\geq 2$, then $C(\Q)$ is finite. In \cite{Mordell}, Mordell conjectured this statement, so it took about 60 years until it was proved. This statement was so difficult and significant that Faltings won a Fields medal for this proof. By now, there are a few different proofs of this statement. 

However, there are no practical algorithms to compute $C(\Q)$ for a given curve $C$, and it is a very active area of research to find methods that can compute precise sets of rational points on curves. As we have already seen in the case of elliptic curves, there are significant problems that reduce to computing rational points on curves (e.g., Mazur's theorem), so it is indeed very important nowadays to further develop existing methods for determining rational points on curves. 

Unlike elliptic curves, the set $C(\Q)$ for $g(C)\geq 2$ has no particular algebraic structure, so sometimes we want to study it by embedding it into an object with more structure, called the {\it Jacobian} $J$ of the curve $C$; this is an abelian variety. As for elliptic curves, Weil \cite{Weil} in 1929 proved, now called Mordell-Weil theorem, that $J(\Q)$ is a finitely generated abelian group, i.e., $J(\Q)\cong \Z^r\bigoplus T$, where $T$ is a finite (torsion) subgroup and $r$ is called the {\it rank} of $J$. In contrast to elliptic curves, much less is known about possibilities for $T$, and the computation of $r$ is much more difficult. In some cases, there are ways to do so, for example, for Jacobians of hyperelliptic curves, by Stoll \cite{Stoll-Implementing-2-descent}, which is implemented in \texttt{Magma}.
\end{enumerate}

\section{The same exponent}\label{sec:The-same-exp}

Fix $m\geq 2$. Let $S=\{a_1^m,\ldots,a_k^m\}$ be a set of $k$ distinct $m$th powers of integers $a_1,\ldots,a_k$. We now construct a polynomial $f_S\in \Z[x]$ such that $f_S(\Z)\cap \P_m=S$.

\subsection{First approach}\label{subsec:Runge-approach}
The solution is inspired by Runge's method (invented in 1887 by Runge \cite{Runge-Original-Paper}; see \cite[Chapter 5]{Schoof-Book-Catalan} for a nice survey). We first define an auxiliary polynomial
\[
g(x)=(x-a_1)\cdots(x-a_k).
\]
Consider a polynomial 
\[
f_S(x)=(x(x^2+1)g(x))^{4m}+(x^{2m}-x^2+2)g(x)^{2m}+x^m.   
\]

\begin{theorem}\label{thm:First-same-exponent} 
We have
\begin{enumerate}[(i)]
\item $S\subseteq f_S(\Z)\cap \P_m$;
\item $f(x)\notin \P_m$ for any $x\in \Z\setminus \{0,a_1,\ldots,a_k\}$;
\item $f_S(\Z)\cap \P_m=S$.
\end{enumerate}
\end{theorem}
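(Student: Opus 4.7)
Part (i) is immediate by direct substitution: at $x = a_i$ the factor $g(x)$ vanishes, killing the first two summands of $f_S$, so $f_S(a_i) = a_i^m \in S$.

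For part (ii), I would use a Runge-style sandwich argument. Setting $A := x(x^2+1)g(x)$ and $B := A^4$, the polynomial decomposes as
\[
f_S(x) = B^m + R(x), \qquad R(x) := (x^{2m}-x^2+2)g(x)^{2m} + x^m,
\]
so that its leading summand is already the $m$th power $B^m$. The plan is to show that for every $x \in \Z \setminus \{0, a_1, \ldots, a_k\}$ one has $0 < R(x) < (B+1)^m - B^m$. Since $B \geq 1$ on this set (each factor of $A$ is then a nonzero integer), this sandwiches $f_S(x)$ strictly between the consecutive $m$th powers $B^m$ and $(B+1)^m$, ruling out $f_S(x) \in \P_m$ (noting that for odd $m$ the positive $m$th powers are still $0^m, 1^m, 2^m, \ldots$).

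The positivity $R(x) > 0$ is easy: $g(x)^{2m} \geq 1$ and $x^{2m}-x^2+2 \geq 2$ for all integer $x$, and a short case split on the sign of $x^m$ (the only term that can be negative, when $m$ is odd and $x \leq -1$) confirms the first summand of $R$ dominates $|x|^m$; the worst case $x = -1$ gives $2g(-1)^{2m} - 1 \geq 1$, and for $|x| \geq 2$ the bound $(x^{2m}-x^2+2) \geq x^{2m}/2 > |x|^m$ does the job. The main obstacle is the upper bound $R(x) < (B+1)^m - B^m$. Here I would use the crude estimates $R(x) \leq 4\,x^{2m} g(x)^{2m}$ (from $|x|^m \leq x^{2m}$, $g(x)^{2m} \geq 1$, and $x^{2m}-x^2+2 \leq 3x^{2m}$ for $|x|\ge 1$) together with $(B+1)^m - B^m \geq m B^{m-1}$; after cancelling common factors the inequality reduces to
\[
4 < m \cdot x^{2m-4}(x^2+1)^{4m-4} g(x)^{2m-4},
\]
which, since $2m-4 \geq 0$, is dominated by the single factor $(x^2+1)^{4m-4} \geq 2^{4m-4}$. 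The tightest case is $m = 2$, where $x^{2m-4} = g(x)^{2m-4} = 1$, but even there $(x^2+1)^4 \geq 16$ leaves ample slack.

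Part (iii) then combines (i) and (ii) with the single remaining case $x = 0$, which, if $0 \notin \{a_1, \ldots, a_k\}$ (otherwise it is already covered by (i)), gives $f_S(0) = 2\,g(0)^{2m}$; this is not an $m$th power since $2$ is not an $m$th power in $\Q$ for $m \geq 2$.
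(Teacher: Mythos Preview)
Your proof is correct and follows essentially the same Runge-style sandwich argument as the paper: trap $f_S(x)$ strictly between the consecutive $m$th powers $B^m$ and $(B+1)^m$ with $B=(x(x^2+1)g(x))^4$. The only differences are cosmetic---the paper bounds the remainder term by term via $(x(x^2+1))^{4m-4}>x^{2m}-x^2+2$ and $(x(x^2+1))^{4m-4}>|x|^m$, whereas you pass through the numerical estimate $R(x)\le 4x^{2m}g(x)^{2m}$ and $(B+1)^m-B^m\ge mB^{m-1}$; and you spell out the $x=0$ case in (iii) explicitly, which the paper leaves implicit in its computation $f_S(0)=2(a_1\cdots a_k)^{2m}$.
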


\begin{proof}\hfill
\begin{enumerate}[(i)]
\item We evaluate (note that this statement is true regardless whether $a_1\cdots a_k=0$ or not)
\[
f_S(0)=2(a_1\cdots a_k)^{2m},\;\; g(a_i)=0,\;\; f_S(a_i)=a_i^m,\;\; \text{for $1\leq i\leq k$}.
\]
\item Let $x\in \Z\setminus \{0,a_1,\ldots,a_k\}$. We first note that $|g(x)|=|(x-a_1)\cdots(x-a_k)|\geq 1$. Then
\[
(x^{2m}-x^2+2)g(x)^{2m}\geq x^{2m}-x^2+2\geq x^{2m}-|x|^m+2 > |x|^m.
\]
It follows that $(x^{2m}-x^2+2)g(x)^{2m}+x^m>0$, so 
\[
f_S(x)>(x(x^2+1)g(x))^{4m}=((x(x^2+1)g(x))^4)^{m}.
\]
Now we prove that 
\begin{equation}\label{eq:polynomial-upper-inequality}
f_S(x)<((x(x^2+1)g(x))^{4}+1)^m,    
\end{equation}
implying that $f(x)$ cannot be an $m$th power because it is nested between two consecutive $m$th powers. To prove inequality \eqref{eq:polynomial-upper-inequality}, it suffices to show
\begin{equation}\label{eq:helpful-upper-inequality}
m(x(x^2+1)g(x))^{4m-4}>(x^{2m}-x^2+2)g(x)^{2m}+x^m.
\end{equation}
The following two inequalities will help us to show \eqref{eq:helpful-upper-inequality}. The first one is
\begin{equation}\label{eq:helpful-helpful-upper-inequality-1}
(x(x^2+1)g(x))^{4m-4}> (x^{2m}-x^2+2)g(x)^{2m}
\end{equation}
which holds because $g(x)^{4m-4}\geq g(x)^{2m}$ and the inequality 
\[
(x(x^2+1))^{4m-4}>x^{2m}-x^2+2
\]
is trivial (note that $x\neq 0$). \\

The second one is clear
\begin{equation}\label{eq:helpful-helpful-upper-inequality-2}
(x(x^2+1)g(x))^{4m-4}\geq (x(x^2+1))^{4m-4}>|x|^{m}\geq x^m.
\end{equation}
Inequality \eqref{eq:helpful-upper-inequality} follows from $m\geq 2$ and inequalities \eqref{eq:helpful-helpful-upper-inequality-1} and \eqref{eq:helpful-helpful-upper-inequality-2}.
\item Follows directly from (i) and (ii).
\end{enumerate}
\end{proof}

\subsection{Second approach}\label{subsec:modular-approach}
If $m\geq 3$, there is another way to construct the required polynomial $f_S\in \Z[x]$. The term $(x^{2m}-x^2+2)((x-a_1)\cdots(x-a_k))^{2m}$ in the previous construction was used to ensure that $f(0)$ in not an $m$th power if $a_1\cdots a_k\neq 0$. One could try a simpler construction 
\[
g_S(x)=((x-a_1)\cdots(x-a_k))^{m}+x^m.
\]
By Fermat's Last Theorem, $g_S(x)$ is not an $m$th power unless $x\in\{0,a_1,\ldots,a_k\}$. However, we want to exclude the possibility for $x=0$ if $a_1\cdots a_k\neq 0$. 
We instead use a more general Fermat's Equation, as suggested by Samir Siksek. Consider
\[
f_S(x)=3((x-a_1)\cdots(x-a_k))^{m}+x^m.
\]

\begin{lemma}\label{L:Only-zero-solution} 
Let $m\geq 3$. If $(x_1,x_2,x_3)\in \Z^3$ satisfy $3x_1^m+x_2^m=x_3^m$, then $x_1=0$.
\end{lemma}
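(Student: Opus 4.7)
The plan is to argue by contradiction and reduce to a known non-solvability result for a generalized Fermat equation. Suppose $(x_1, x_2, x_3) \in \Z^3$ is a solution to $3x_1^m + x_2^m = x_3^m$ with $x_1 \neq 0$. A standard coprimality reduction, dividing out any common prime factor, allows us to assume $\gcd(x_1, x_2, x_3) = 1$; then elementary divisibility analysis, using crucially that the coefficient is $3$ and that $m \geq 3$, shows $x_1, x_2, x_3$ are in fact pairwise coprime. For instance, a prime $p \neq 3$ dividing $\gcd(x_2, x_3)$ would force $p^m \mid 3x_1^m$ and hence $p \mid x_1$, while $3 \mid \gcd(x_2, x_3)$ would force $3^{m-1} \mid x_1^m$ and again $3 \mid x_1$.

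Next I would reduce to the case $m$ prime. If an odd prime $p$ divides $m$ with $m = pk$, then $(x_1^k, x_2^k, x_3^k)$ solves $3X^p + Y^p = Z^p$ with first coordinate still non-zero, so it suffices to handle prime $m$; if instead $m$ is a power of $2$, then $m \geq 4$ and the analogous reduction sends us to $m = 4$. For $m = 3$, the equation $X^3 + Y^3 = 3 Z^3$ is classically known to have only trivial integer solutions, e.g.\ by infinite descent in $\Z[\zeta_3]$ or equivalently because the associated elliptic curve has rank $0$ with only the obvious torsion.

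For $m = p \geq 5$ prime, I would apply the modular method: attach a Frey elliptic curve to any hypothetical non-trivial coprime solution, invoke Ribet's level-lowering together with the modularity theorem to force the mod-$p$ Galois representation to come from a weight-$2$ newform of very small level, and then verify that no such newform exists. This is the standard Serre--Ribet--Wiles framework for ternary equations of signature $(p,p,p)$ with small integer coefficients, surveyed in \cite{Bennett-Mihailescu-Siksek}; the residual case $m = 4$ can be handled by a direct $2$-descent (or a mod-$16$ analysis combined with descent in the factorisation $(x_3^2 - x_2^2)(x_3^2 + x_2^2) = 3 x_1^4$). The main obstacle I expect is the case $p \geq 5$ itself --- carrying out the Frey-curve and level-lowering analysis cleanly for the coefficient triple $(3,1,-1)$ takes care, and the cleanest presentation is probably to cite an existing theorem for the equation $3X^p + Y^p = Z^p$ rather than rederive the modular-method bookkeeping inside this paper.
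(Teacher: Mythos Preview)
Your proposal is correct and follows essentially the same strategy as the paper: reduce to a prime exponent (or to $m=4$), invoke the modular method for $p\geq 5$ (the paper cites Kraus \cite{Kraus-p-p-p} specifically, landing at level~$6$ where there are no newforms), and handle $m=3$ via the rank-zero elliptic curve. The only noticeable difference is at $m=4$, where the paper computes the Jacobian of $y^2=1-3x^4$ (order~$2$, hence $X'(\Q)=\{(0,\pm1)\}$) rather than carrying out the factorisation/descent you sketch.
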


\begin{proof}
We distinguish three cases according to whether $m$ has a prime divisor $p\geq 5$, $m$ is a power of 3, or $m\geq 4$ is a power of 2.
\begin{enumerate}[(i)]
\item Let $p\geq 5$ be a prime number dividing  $m$. By a generalized approach to Fermat's Last Theorem by Kraus \cite{Kraus-p-p-p}, the equation $3x_1^p+x_2^p=x_3^p$ has only trivial  solutions $x_1x_2x_3=0$, which implies that $x_1=0$. See also notes by Siksek \cite{Siksek-Modular-approach-notes} for a nice explanation of the modular method. In this concrete case, see \cite[Theorems 1, 15]{Siksek-Modular-approach-notes} for an argument that a non-trivial solution to $3x_1^p+x_2^p=x_3^p$ corresponds to a certain newform of weight 2 and level 6, which does not exist. Hence, if $3x_1^m+x_2^m=x_3^m$, then $x_1=0$.
\item If $m$ is a power of 3, it suffices to consider $m=3$. The cubic curve 
\[
X\colon 3x_1^3+x_2^3=x_3^3
\]
has genus one and $X(\Q)\neq \emptyset$, hence it is isomorphic over $\Q$ to an elliptic curve. As pointed out in \S\ref{subsec:RPoncurves} \eqref{g=1}, using \texttt{Magma} \cite{Magma}, we prove that the rank of $X$  is zero, and that $X(\Q)$ consists only of the point at infinity, which corresponds precisely to $x_1=0$.
\item If $m\geq 4$ is a power of 2, it suffices to consider $m=4$. Integral solutions to $3x_1^4+x_2^4=x_3^4$ correspond to the rational points on a curve 
\[
X'\colon 3x^4+y^4=1.
\]
One can consider a curve 
\[
X/\Q\colon y^2=1-3x^4.
\]
As we stated in \S\ref{subsec:RPoncurves} \eqref{g>=2}, we can embed $X$ into its Jacobian $J$, and, using \texttt{Magma}, we prove $J(\Q)\simeq \Z/2\Z$. Hence, we conclude $\#X(\Q)\leq \#J(\Q)= 2$, so $X(\Q)=\{(0,\pm1)\}$, implying $X'(\Q)=\{(0,\pm1)\}$, and hence, that $x_1=0$. 
\end{enumerate}
\end{proof}

\begin{corollary}\label{cor:Second-approach-conclusion}
We have $f_S(\Z)\cap \P_m=S$.
\end{corollary}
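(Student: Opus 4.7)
The plan is to observe that the corollary is an essentially immediate consequence of Lemma \ref{L:Only-zero-solution} together with a direct evaluation of $f_S$ at the points $a_i$. So the structure will be the standard two-sided inclusion for the equality $f_S(\Z)\cap \P_m = S$.

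For the inclusion $S\subseteq f_S(\Z)\cap \P_m$, I would simply substitute $x=a_i$ into
\[
f_S(x)=3\bigl((x-a_1)\cdots(x-a_k)\bigr)^{m}+x^{m},
\]
noting that the first summand vanishes because one of the factors is $0$, so $f_S(a_i)=a_i^{m}\in S\subseteq \P_m$. Doing this for every $i\in\{1,\dots,k\}$ gives the claimed inclusion.

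For the reverse inclusion $f_S(\Z)\cap \P_m\subseteq S$, suppose $x\in\Z$ is such that $f_S(x)\in \P_m$, i.e.\ $f_S(x)=y^{m}$ for some $y\in\Z$. Setting $x_1\colonequals (x-a_1)\cdots(x-a_k)$, $x_2\colonequals x$, $x_3\colonequals y$, the defining relation of $f_S$ becomes $3x_1^{m}+x_2^{m}=x_3^{m}$. By Lemma \ref{L:Only-zero-solution} (applicable since $m\geq 3$), we must have $x_1=0$, that is $(x-a_1)\cdots(x-a_k)=0$, so $x=a_i$ for some $i\in\{1,\dots,k\}$; then $f_S(x)=a_i^{m}\in S$.

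Combining the two inclusions gives $f_S(\Z)\cap \P_m=S$. There is no real obstacle in the argument at this stage: all the work has already been absorbed into Lemma \ref{L:Only-zero-solution}, whose proof is the genuine content (requiring the modular method, a rank computation on an elliptic curve, and a Jacobian computation on a genus-$3$ quotient, for the three cases $p\ge 5\mid m$, $m=3$, $m=4$ respectively). The corollary itself is only a packaging step, so the proof will be very short.
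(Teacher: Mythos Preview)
Your proof is correct and follows exactly the same route as the paper: one inclusion by direct evaluation $f_S(a_i)=a_i^{m}$, the other by applying Lemma~\ref{L:Only-zero-solution} to the triple $\bigl((x-a_1)\cdots(x-a_k),\,x,\,y\bigr)$. One small inaccuracy in your closing parenthetical (which is not part of the corollary's proof): the auxiliary curve $y^2=1-3x^4$ used for the $m=4$ case has genus~$1$, not genus~$3$.
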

\begin{proof}
It follows by Lemma \ref{L:Only-zero-solution} that $f_S(x)\in \P_m$ only when $x\in S$, and we compute directly that $f_S(a_i)=a_i^m$ for every $1\leq i\leq k$.
\end{proof}

We note that this approach does not work directly for $m=2$ as the equation
\[qx_1^2+x_2^2=x_3^2\] has solutions with $x_1\neq0$ for any $q\in \Z$. We may assume that $q>0$. If $q$ is not a square of an integer, then, consider $x_2=1$ and note that Pell's equation $x_3^2-qx_1^2=1$ has infinitely many solutions. If $q=r^2$ is a square of an integer $r$, then $(rx_1,x_2,x_3)$ is a Pythagorean triple, and we can find infinitely many of them (for example, let $x_1=2s$, $x_2=s^2-r^2$, $x_3=s^2+r^2$, for $s\in\Z$).

\section{General case}\label{sec:General-case}

In this section, using Mih\u{a}ilescu's theorem, we construct a polynomial with the desired property in the general case. Let $S=\{b_1,\ldots,b_k\}\subseteq\P$ be a finite set of perfect integer powers. We construct a polynomial $f_S\in \Z[x]$ such that $f_S$ is the identity on $S$, and $f_S(x)\notin \P$, for all $x\in \Z\setminus S$. Define an auxiliary polynomial 
\[
g(x)=((x-b_1)\cdots(x-b_k))^4+1,
\]
and let 
\[f_S(x)=g(x)((x-1)g(x)+1).\]\\
We now prove that $f_S$ satisfies the property asked in  Question \ref{q?-1}, hence giving a positive answer to Question \ref{q?-1}. 

\begin{theorem}\label{thm:Second-general}\hfill
\begin{enumerate}[(i)]
\item Let $x\in \Z$. Then $f_S(x)\in \P$ if and only if $x\in S$.
\item We have $f_S(\Z)\cap \P = S$.
\end{enumerate}
\end{theorem}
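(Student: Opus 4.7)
The plan is to prove (i) and then deduce (ii) for free: once the equivalence in (i) is established, the image $f_S(\Z)\cap\P$ equals $\{f_S(x) : x\in S\}$, which we will see equals $S$ itself.

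I will first dispatch the easy implication. If $x=b_i\in S$, the product $(x-b_1)\cdots(x-b_k)$ vanishes, so $g(b_i)=1$, and a direct substitution yields $f_S(b_i)=1\cdot((b_i-1)\cdot 1+1)=b_i\in\P$. This simultaneously handles one direction of (i) and shows $f_S|_S$ is the identity, which is what is needed to upgrade (i) to (ii).

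The substantive direction is to show $f_S(x)\notin\P$ for every $x\in\Z\setminus S$. My strategy is to factor $f_S(x)=g(x)\cdot h(x)$ with $h(x):=(x-1)g(x)+1$, and to exploit coprimality. Indeed, $h(x)\equiv 1\pmod{g(x)}$, so $\gcd(g(x),h(x))=1$. Moreover, since $x\neq b_j$ for every $j$, the integer $M:=(x-b_1)\cdots(x-b_k)$ is nonzero, hence $g(x)=M^4+1\geq 2$. If $f_S(x)=a^m$ with $m\geq 2$, then coprimality together with unique factorization (and a small sign analysis handling the possibility that $h(x)<0$ for odd $m$) forces both $|g(x)|$ and $|h(x)|$ to be $m$th powers separately, and since $g(x)>0$ there is an integer $u\geq 2$ with $g(x)=u^m$. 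This produces a Catalan-type relation
\[
u^m-|M|^4=1,\qquad u\geq 2,\;|M|\geq 1,\;m\geq 2.
\]

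The final step is to rule out this equation. When $|M|=1$, the relation becomes $u^m=2$, which has no integer solution for $m\geq 2$. When $|M|\geq 2$, Mih\u{a}ilescu's theorem applies: its unique exceptional solution $3^2-2^3=1$ would force $|M|^4=8$, which is impossible because $8$ is not a fourth power. The main obstacle is really structural rather than technical: it is the coprimality of $g(x)$ and $(x-1)g(x)+1$, built into the shape of $f_S$, that reduces the question to whether $g(x)$ is a pure power, and it is precisely the choice of exponent $4$ in the definition of $g$ that prevents Mih\u{a}ilescu's exceptional solution from re-entering through the back door.
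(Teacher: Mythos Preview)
Your proof is correct and follows essentially the same route as the paper's: exploit the built-in coprimality of $g(x)$ and $(x-1)g(x)+1$ to force $g(x)$ itself to be a perfect $m$th power, then invoke Mih\u{a}ilescu's theorem on $u^m - M^4 = 1$ to conclude $M=0$. Your version is slightly more explicit in separating the degenerate case $|M|=1$ (where $u^m=2$ fails directly) from the genuine Catalan case $|M|\geq 2$, whereas the paper absorbs both into the single remark that the exponent $4$ on $c$ exceeds $3$; but the substance is identical.
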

\begin{proof}\hfill
\begin{enumerate}[(i)]
\item Let $x\in \Z$ be such that $f_S(x)=y^n$, for some integers $y,n$ with $n\geq 2$. Since $g(x)$ and $(x-1)g(x)+1$ are coprime integers, we conclude that there is $z\in \Z$ such that $g(x)=\pm z^n$, and since $g(x)>0$, we may assume that $g(x)=z^n$. Denote 
\[c\colonequals (x-b_1)\cdots(x-b_k).\] Then we have
\[
z^n-c^4=1.
\]
Since the exponent of $c$ is greater than 3, by Mih\u{a}ilescu's theorem, the only possibility is that $z=1$ and $c=0$, implying that $x=b_i$ for some $1\leq i\leq k$. If $x\in S$, we evaluate $f_S(x)=x\in \P$.
\item Follows from (i) and the fact that $f_S$ is the identity on $S$.
\end{enumerate}
\end{proof}

\section{Generalizations.}\label{sec:Rational-case}

It is interesting to see whether our approaches work when we ask the same questions with integers replaced by rational numbers. Denote by $\QQ=\{a^m\colon a \in \Q, m\geq 2\}$ the set of all rational powers, and, for a fixed integer $m\geq 2$, let $\QQ_m=\{a^m\colon a\in \Q\}$ be the set of all $m$th powers of rational numbers. We have naturally two questions. 

\subsection{Rational numbers: The same exponent}\label{subsec:rational-numbers-same-exponent}

\begin{question}\label{q?-3}
Let $S\subseteq \QQ_m$ be a finite subset of $m$th rational powers. Is there a polynomial $f_S\in \Q[x]$ such that $f_S(\Q)\cap \QQ_m = S$?
\end{question}

We first note that for $m\geq 3$ we can give a positive answer to Question \ref{q?-3}. It is clear that the first approach cannot be used because there is no version of Theorem \ref{thm:First-same-exponent} that covers rational numbers. In this approach, we use the property that distinct integers differ by at least 1, which is not true for rational numbers; their absolute difference can be arbitrarily small. However, Lemma \ref{L:Only-zero-solution} remains true for rational numbers, i.e., if we replace the condition $(x_1,x_2,x_3)\in \Z^3$ by $(x_1,x_2,x_3)\in \Q^3$. This follows because the equation $3x_1^m+x_2^m=x_3^m$ is homogeneous, so any rational solution easily leads to an integer solution. 
Hence, if $S=\{a_1^m,\ldots,a_k^m\}$ is a set of $k$ distinct $m$th powers of rational numbers $a_1,\ldots,a_k$, where $m\geq 3$, then again, for the polynomial
\[
f_S(x)=3((x-a_1)\cdots(x-a_k))^{m}+x^m,
\]
we have that $f_S(\Q)\cap \QQ_m = S$.\\

\subsection{Rational numbers: General case}\label{subsec:rational-numbers-general-case}

On the other hand, the approach from \cref{sec:General-case} does not work directly over rational numbers. We cannot use the coprimality argument in the factorization so easily; we would need to take care of possible denominators. Hence, we can formulate the question:

\begin{question}\label{conj:General-rationals}
Let $S\subseteq \QQ$ be a finite set of perfect rational powers. Is there a polynomial $f_S\in \Q[x]$ such that $f_S(\Q)\cap \QQ = S$?
\end{question}

This question was answered affirmatively by Santicola \cite{ReverseDE-Q}, who noted that it is sufficient to use special cases of Mih\u{a}ilescu's theorem proven by Lebesgue \cite{Lebesgue} to answer Question \ref{q?-2}. Furthermore, Santicola's construction uses the results of \cite{Bennett-Ellenberg-Ng, BruinEllCh, EllenbergGFE}. To handle possible denominators and to prove the key lemma \cite[Lemma 6]{ReverseDE-Q}, Santicola used the result of Peth\H{o} \cite{Petho}, and independently of Shorey and Stewart \cite{Shorey-Stewart}.  

\subsection{Challenge}\label{subsec:challenge}

We see that this article has already inspired further research. It is also interesting to consider this question over other rings and fields. We challenge the interested reader to try to answer the analogous questions over $\Z[i]$ or $\Q[i]$, or, in general, over $\mathcal{O}_K$ or $K$, where $K/\Q$ is any number field.

\subsection*{Acknowledgment.}
The author would like to thank Samir Siksek for raising the questions, discussing the topic and giving useful comments and suggestions, Michael Stoll for organizing the conference ``Rational Points 2022'' and for the financial support for the conference, Ludwig Fürst and Himanshu Shukla for conversations about this topic and writing a report on the problem session, and Pieter Moree, Oana Padurariu, and Alain Togbe for many valuable comments that improved the article. The author was supported by a guest postdoc fellowship at the Max Planck Institute for Mathematics in Bonn, by Czech Science Foundation GAČR, grant 21-00420M, and by Charles University Research Centre program UNCE/SCI/022 during various
stages of this project.

\end{document}